\documentclass[11pt]{article}
\usepackage[a4paper,margin=1in]{geometry}
\usepackage[T1]{fontenc}
\usepackage[utf8]{inputenc}
\usepackage{lmodern}
\usepackage{amsmath,amssymb,amsthm,mathtools,bm}
\usepackage{microtype}
\usepackage{hyperref}
\usepackage[nameinlink,capitalize]{cleveref}
\usepackage{xcolor}
\usepackage{booktabs}
\usepackage{enumitem}
\hypersetup{colorlinks=true,linkcolor=blue!45!black,citecolor=blue!45!black,urlcolor=blue!45!black}
\numberwithin{equation}{section}

\newtheorem{theorem}{Theorem}[section]

\newtheorem{lemma}[theorem]{Lemma}
\newtheorem{corollary}[theorem]{Corollary}
\theoremstyle{remark}
\newtheorem{remark}[theorem]{Remark}
\crefname{theorem}{theorem}{theorems}
\crefname{lemma}{lemma}{lemmas}
\crefname{proposition}{proposition}{propositions}
\crefname{corollary}{corollary}{corollaries}

\newcommand{\R}{\mathbb R}

\newcommand{\Da}{\Delta_5}

\title{\textbf{Hardy Criticality in Axisymmetric Swirl}}
\author{Rishad Shahmurov}
\date{}

\begin{document}
\maketitle

\begin{abstract}
For smooth axisymmetric Navier--Stokes flow with swirl, set
\(F=u^\theta/r\), \(\Gamma=ru^\theta\), \(G=\omega^\theta/r\), and \(U=u^r/r\).
Weighted swirl energies are classical; we determine the sharp two-parameter coercivity spectrum of the exact power-weight family
\(\int |F|^p r^\alpha\,dr\,dz\), \(p>1\), \(\alpha>1\).
The full diffusion form has a positive gradient gap for \(\alpha<2p+1\), loses every uniform gap at \(\alpha=2p+1\), and is indefinite above it.  Exactly at the same threshold the radial-strain term cancels, and the identity becomes the classical \(L^p\) circulation energy for \(\Gamma\).  Thus stretching cancellation and Hardy criticality coincide within the power family.  We also characterize \(r^3drdz\) by conservative \(F\)-transport, self-adjoint five-dimensional diffusion, and a flat \(G\)-source pairing, and obtain the optimal strain form-bound threshold within the Hardy-coercive powers.
\end{abstract}

\noindent\textbf{Keywords.} axisymmetric Navier--Stokes; swirl; Hardy inequality; weighted energy; circulation; criticality.\\
\textbf{MSC 2020.} 35Q30, 76D05, 35B35, 35B65.

\section{Introduction}

Let
\[
 u=u^r(r,z,t)e_r+u^\theta(r,z,t)e_\theta+u^z(r,z,t)e_z
\]
be a smooth axisymmetric solution of the three-dimensional incompressible Navier--Stokes equations on \(\R^3\).  We suppress the harmless angular factor \(2\pi\) in all meridional integrals.  The variables
\[
 F=\frac{u^\theta}{r},\qquad \Gamma=ru^\theta=r^2F,
 \qquad G=\frac{\omega^\theta}{r},\qquad U=\frac{u^r}{r}
\]
separate three familiar features of the problem: the circulation \(\Gamma\) obeys a maximum principle, the swirl rate \(F\) is diffused by a five-dimensional Bessel operator but is stretched by \(U\), and the vorticity ratio \(G\) is forced by \(\partial_z(F^2)\).  These variables and their pole compatibility are standard in the axisymmetric theory; see, for example, \cite{ChaeLee2002,LiuWang2009,LeiZhang2017,ChenFangZhang2017,RencZaj2019,WangHuangWeiYu2024}.

Weighted swirl and circulation energies are also classical.  Chae and Lee \cite{ChaeLee2002} obtained, among other a priori estimates, the familiar \(L^p\) control of the circulation \(ru^\theta\) in the standard range \(p\ge2\).  Chen, Fang and Zhang \cite{ChenFangZhang2017} proved weighted angular-velocity criteria involving \(r^d u^\theta\) for \(0\le d<1\) and explicitly observed that their Hardy-based argument fails at \(d=1\).  Renc\l awowicz and Zaj\k aczkowski \cite{RencZaj2019} introduced the weighted circulation variable \(u_a=\Gamma/r^a\), derived its \(L^s\)-energy with the positive potential \(a(2-a)/r^2\), and used the scale-balanced choice \(as=3\).  Thus neither weighted swirl energies, the circulation endpoint, nor the appearance of a Hardy obstruction at the endpoint is new here.

The purpose of this paper is instead to close the algebraic and coercive classification of the entire two-parameter power family.  We ask: for fixed \(p>1\), what can the weight \(r^\alpha\) actually accomplish in the exact \(F\)- and \(G\)-energies, what is the best full-gradient coercive constant, and where does that constant fail?  The answer is rigid.  Write
\[
 d\mu_\alpha=r^\alpha\,dr\,dz,\qquad \alpha>1.
\]
For the \(F^p\)-energy, the coefficient of radial compression vanishes at one and only one weight,
\[
 \alpha=2p+1.
\]
At precisely the same weight the radial diffusion form reaches the sharp Hardy boundary.  Moreover, the resulting energy is exactly the ordinary three-dimensional \(L^p\)-norm of the circulation \(\Gamma\).  Thus the apparent freedom to tune a radial power weight is illusory: cancellation of stretching simply returns the problem to the circulation variable, and no positive \(F\)-gradient gap survives at that endpoint.

The second conclusion concerns the coupled \((F,G)\) system.  The weight \(r^3drdz\), corresponding to the usual five-dimensional lift, is selected independently by three exact properties: it is the unique power weight for which the \(F\)-equation is conservative, the Bessel operator is formally self-adjoint, and the source pairing in the \(G\)-energy becomes the flat nonsingular product \(2\int \Gamma W G\,drdz\), with \(W=\Gamma_z/r\).  Among the Hardy-coercive powers \(3\le\alpha<2p+1\) for the \(F^p\) family, and \(3\le\alpha<5\) for the \(G\)-family, the same weight also maximizes the scale-invariant strain form bound that can be absorbed by the Hardy gap.

The sharp constants follow from the one-dimensional weighted Hardy inequality, but their coincidence with the exact Navier--Stokes stretching coefficients yields a useful no-go principle and an optimal power-weight form-bound threshold.  This is naturally comparable with the scale-invariant form-boundedness viewpoint of Lei--Zhang \cite{LeiZhang2017}, although the present result concerns the exact multiplier spectrum rather than a global regularity theorem.  Weighted Hardy, weighted-swirl, and Hardy--Sobolev estimates have played an important role in axisymmetric regularity theory \cite{ChenFangZhang2017,RencZaj2019,Zhang2017,WangHuangWeiYu2024}.  The contribution here is the sharp closed phase diagram for the exact \((p,\alpha)\) power-multiplier family: the best full-gradient coercive constant, the equivalence of stretching cancellation with Hardy criticality, full-form indefiniteness beyond the critical line, and the exact conjugation of that line to the physical circulation energy.

\section{Axisymmetric equations and weighted calculus}

The physical meridional incompressibility relation is
\begin{equation}\label{eq:div3}
 \partial_r u^r+\frac{u^r}{r}+\partial_z u^z=0.
\end{equation}
The variables \(F,G\) satisfy
\begin{align}
 F_t+u^rF_r+u^zF_z
 &=\nu\Da F-2UF,\label{eq:F}\\
 G_t+u^rG_r+u^zG_z
 &=\nu\Da G+\partial_z(F^2),\label{eq:G}
\end{align}
where
\begin{equation}\label{eq:Delta5}
 \Da=\partial_{rr}+\frac3r\partial_r+\partial_{zz}.
\end{equation}
The circulation solves
\begin{equation}\label{eq:Gamma}
 \Gamma_t+u^r\Gamma_r+u^z\Gamma_z
 =\nu\left(\Gamma_{rr}-\frac1r\Gamma_r+\Gamma_{zz}\right).
\end{equation}
For smooth Cartesian axisymmetric fields one has \(\Gamma=O(r^2)\) and \(F,G=O(1)\) at the axis; the pole conditions needed to pass between cylindrical and Cartesian regularity are described in \cite{LiuWang2009}.
All weighted identities below are first justified for smooth rapidly decaying solutions.  More generally, they hold on any smooth time interval on which the displayed weighted quantities are finite, by inserting radial and axial cutoffs and passing to the limit.  The axis boundary terms vanish because of the Cartesian pole conditions.

\begin{lemma}\label{lem:weighteddiv}
For every \(\alpha\in\R\),
\[
 r^{-\alpha}\left[\partial_r(r^\alpha u^r)+\partial_z(r^\alpha u^z)\right]
 = (\alpha-1)U.
\]
Consequently,
\[
 \int (u^r h_r+u^z h_z)\,h\,d\mu_\alpha
 =-\frac{\alpha-1}{2}\int U h^2\,d\mu_\alpha
\]
whenever the boundary terms vanish.
\end{lemma}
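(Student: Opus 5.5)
The plan is a direct product-rule computation followed by one integration by parts; nothing beyond \eqref{eq:div3} is needed.

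For the first identity, expand
\[
 \partial_r(r^\alpha u^r)+\partial_z(r^\alpha u^z)
 = r^\alpha\left(\partial_r u^r+\partial_z u^z\right)+\alpha r^{\alpha-1}u^r ,
\]
using that \(r^\alpha\) does not depend on \(z\). By \eqref{eq:div3}, \(\partial_r u^r+\partial_z u^z=-u^r/r=-U\). Substituting gives \(r^\alpha\bigl(-U+\alpha U\bigr)=(\alpha-1)r^\alpha U\), and dividing by \(r^\alpha\) yields the claim.

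For the integral identity, write \((u^r h_r+u^z h_z)h=\tfrac12(u^r\partial_r+u^z\partial_z)(h^2)\). Then
\[
 \int (u^r h_r+u^z h_z)h\,d\mu_\alpha
 =\frac12\int\left[(r^\alpha u^r)\partial_r(h^2)+(r^\alpha u^z)\partial_z(h^2)\right]dr\,dz .
\]
Integrate by parts in \(r\) and \(z\) separately, moving the derivatives onto \(r^\alpha u^r\) and \(r^\alpha u^z\). The boundary terms at \(r=0\), at infinity, and as \(|z|\to\infty\) are discarded by hypothesis. This leaves
\[
 -\frac12\int h^2\left[\partial_r(r^\alpha u^r)+\partial_z(r^\alpha u^z)\right]dr\,dz .
\]
By the first identity, this equals \(-\tfrac{\alpha-1}{2}\int U h^2 r^\alpha\,dr\,dz\), which is the assertion.

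The only point that needs care is the boundary term at the axis, \(\bigl[r^\alpha u^r h^2\bigr]_{r=0}\). For smooth fields with \(u^r=O(r)\) and \(h=O(1)\) (the pole conditions recalled above), this term is \(O(r^{\alpha+1})\to0\) whenever \(\alpha>-1\), and in particular throughout the range \(\alpha>1\) used in the paper. For general \(\alpha\) it is exactly the hypothesis ``whenever the boundary terms vanish.'' The decay at infinity is handled by the cutoff convention stated before the lemma. I would note this only briefly, since the algebra itself is routine.
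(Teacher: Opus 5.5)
Your proposal is correct and follows the paper's proof: the first identity is the product rule combined with \eqref{eq:div3}, and the second is integration by parts of \(\tfrac12 b\cdot\nabla(h^2)\) against \(r^\alpha\), using the first identity and discarding the boundary terms by hypothesis. Your remark on the axis term \(O(r^{\alpha+1})\) is a reasonable extra detail that the paper leaves implicit.
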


\begin{proof}
The first identity follows immediately from \eqref{eq:div3}.  The second is integration by parts applied to \(\frac12 b\cdot\nabla(h^2)\), where \(b=(u^r,u^z)\).
\end{proof}

\begin{lemma}\label{lem:diffusion}
Let \(\alpha>1\), \(p>1\), and let \(f\) be smooth, axis-compatible, and sufficiently decaying. Put \(H=|f|^{p/2}\). Then
\begin{align}
 \int |f|^{p-2}f\,\Da f\,d\mu_\alpha
 ={}&-\frac{4(p-1)}{p^2}\int |\nabla H|^2\,d\mu_\alpha\nonumber\\
 &-\frac{(3-\alpha)(\alpha-1)}p
 \int H^2 r^{\alpha-2}\,dr\,dz.\label{eq:diffusion-general}
\end{align}
For \(1<p<2\), the left-hand side is understood through the standard convex renormalization described in the proof.
\end{lemma}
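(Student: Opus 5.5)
Write \(\Da f = r^{-3}\partial_r(r^3 f_r)+f_{zz}\) and express the radial part in the measure \(d\mu_\alpha\) as
\[
 \Da f=r^{-\alpha}\partial_r(r^\alpha f_r)+\frac{3-\alpha}{r}f_r+f_{zz}.
\]
This splits the left-hand side of \eqref{eq:diffusion-general} into a self-adjoint part and a first-order drift term. For \(p\ge 2\), where \(\phi(f)=|f|^{p-2}f\) is \(C^1\), I integrate the self-adjoint part by parts in \(r\) and \(z\):
\[
 \int \phi(f)\bigl[r^{-\alpha}\partial_r(r^\alpha f_r)+f_{zz}\bigr]d\mu_\alpha
 =-(p-1)\int |f|^{p-2}|\nabla f|^2\,d\mu_\alpha .
\]
The axis boundary term \(r^\alpha\phi(f)f_r\) vanishes at \(r=0\) because \(\alpha>1\) and \(f_r=O(r)\) (indeed \(f_r\) is bounded and odd in \(r\) by pole compatibility). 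Decay handles infinity. Then \(|f|^{p-2}|\nabla f|^2=\frac{4}{p^2}|\nabla H|^2\) gives the first term with coefficient \(4(p-1)/p^2\).

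For the drift term I use \(\phi(f)f_r=\frac1p\partial_r|f|^p=\frac1p\partial_r H^2\). Then
\[
 (3-\alpha)\int \frac1p\,\partial_r(H^2)\,r^{\alpha-1}\,dr\,dz
 =-\frac{(3-\alpha)(\alpha-1)}{p}\int H^2 r^{\alpha-2}\,dr\,dz ,
\]
and the boundary term \(H^2r^{\alpha-1}\) vanishes at the axis since \(\alpha>1\) and \(H\) is bounded. This is exactly the second term, and the identity follows by adding the two pieces. Throughout I would insert a cutoff \(\chi_R(r,z)\) to justify the integrations by parts at infinity, and remove it using the assumed decay.

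For \(1<p<2\), \(\phi\) is only Hölder at \(f=0\). Here I would use the standard renormalization: replace \(|f|^p\) by \((f^2+\varepsilon^2)^{p/2}-\varepsilon^p\), so that \(\phi_\varepsilon(f)=f(f^2+\varepsilon^2)^{(p-2)/2}\), and run the same computation with \(\phi_\varepsilon\). The drift term becomes an exact derivative of \(\frac1p[(f^2+\varepsilon^2)^{p/2}-\varepsilon^p]\), and the gradient term is
\[
 \int \phi_\varepsilon'(f)|\nabla f|^2\,d\mu_\alpha,
\]
with \(\phi_\varepsilon'\ge0\). The left-hand side is defined as the \(\varepsilon\to0\) limit. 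Passing to the limit in the zeroth-order term uses dominated convergence; this needs \(\int|f|^p r^{\alpha-2}<\infty\), which holds because \(f\) is bounded near the axis and \(\alpha>1\). The gradient term I would handle by monotone or Fatou convergence to \(\frac{4(p-1)}{p^2}\int|\nabla H|^2\), using \(\phi_\varepsilon'(f)\to(p-1)|f|^{p-2}\) on \(\{f\neq0\}\), and \(\nabla H=0\) a.e.\ on \(\{f=0\}\).

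I expect this limiting step for \(p<2\), together with checking the axis boundary terms, to be the only delicate point; the rest is a two-line integration by parts.
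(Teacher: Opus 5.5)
Your argument is the paper's proof: the same splitting \(\Da f=r^{-\alpha}\partial_r(r^\alpha f_r)+\frac{3-\alpha}{r}f_r+f_{zz}\), the same two integrations by parts with the same axis checks, and the same renormalization strategy for \(1<p<2\). One small repair is needed in your limit step. With your regularizer, \(\phi_\varepsilon'(s)=(s^2+\varepsilon^2)^{(p-4)/2}\bigl((p-1)s^2+\varepsilon^2\bigr)\) is not monotone in \(\varepsilon\), so monotone convergence is unavailable, and Fatou alone gives only a lower bound. Add the estimate \(0\le\phi_\varepsilon'(s)\le(s^2+\varepsilon^2)^{(p-2)/2}\le|s|^{p-2}\) and conclude by dominated convergence. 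The paper avoids this by choosing \(\Phi_\varepsilon''=p(p-1)(s^2+\varepsilon^2)^{(p-2)/2}\), which increases as \(\varepsilon\downarrow0\), precisely so that monotone convergence applies directly.
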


\begin{proof}
For \(p\ge2\), direct integration by parts gives
\[
 -(p-1)\int |f|^{p-2}|\nabla f|^2r^\alpha
 =-\frac{4(p-1)}{p^2}\int|\nabla H|^2r^\alpha.
\]
The mismatch between the coefficient \(3/r\) in \(\Da\) and the measure \(r^\alpha drdz\) is
\[
 (3-\alpha)\int |f|^{p-2}ff_r r^{\alpha-1}
 =-\frac{(3-\alpha)(\alpha-1)}p\int |f|^p r^{\alpha-2},
\]
and the axis term vanishes because \(\alpha>1\) and \(f\) is axis-compatible.

For \(1<p<2\), it is useful to avoid differentiating \(|f|^{p-2}f\) at the zero set.  Define an even convex function \(\Phi_\varepsilon\) by
\[
 \Phi_\varepsilon(0)=\Phi_\varepsilon'(0)=0,
 \qquad
 \Phi_\varepsilon''(s)=p(p-1)(s^2+\varepsilon^2)^{(p-2)/2}.
\]
Then \(0\le \Phi_\varepsilon(s)\le |s|^p\),
\(s\Phi_\varepsilon'(s)/p\le |s|^p\), and, as \(\varepsilon\downarrow0\),
\[
 \Phi_\varepsilon(s)\uparrow |s|^p,
 \qquad
 \frac1p\Phi_\varepsilon''(s)\uparrow (p-1)|s|^{p-2}
 \quad (s\ne0).
\]
Test the diffusion term with \(\Phi_\varepsilon'(f)/p\). Integration by parts gives exactly
\[
 -\frac1p\int \Phi_\varepsilon''(f)|\nabla f|^2r^\alpha
 -\frac{(3-\alpha)(\alpha-1)}p
   \int \Phi_\varepsilon(f)r^{\alpha-2}.
\]
Monotone convergence applies to the nonnegative gradient density and to \(\Phi_\varepsilon(f)\); the standard Sobolev chain rule identifies
\[
 (p-1)|f|^{p-2}|\nabla f|^2
 =\frac{4(p-1)}{p^2}|\nabla |f|^{p/2}|^2
\]
almost everywhere. This yields \eqref{eq:diffusion-general}.  The same renormalization, with spatial and temporal cutoffs if needed, justifies the \(F^p\)-testing below for the whole range \(p>1\).
\end{proof}

\begin{lemma}\label{lem:hardy}
For \(\alpha>1\) and \(h\in C_c^\infty((0,\infty)\times\R)\),
\begin{equation}\label{eq:hardy}
 \int h^2r^{\alpha-2}\,dr\,dz
 \le \frac4{(\alpha-1)^2}
 \int |h_r|^2r^\alpha\,dr\,dz.
\end{equation}
The constant is sharp.
\end{lemma}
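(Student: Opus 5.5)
Since the weights depend only on \(r\) and \(h\) has compact support in \((0,\infty)\times\R\), I will prove the one-dimensional inequality
\[
 \int_0^\infty g^2 r^{\alpha-2}\,dr\le \frac4{(\alpha-1)^2}\int_0^\infty |g'|^2 r^\alpha\,dr,
 \qquad g\in C_c^\infty((0,\infty)),
\]
apply it to \(g=h(\cdot,z)\) for each fixed \(z\), and integrate in \(z\).

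For the one-dimensional bound I integrate by parts, writing \(r^{\alpha-2}=\frac{1}{\alpha-1}\partial_r(r^{\alpha-1})\). There are no boundary terms because \(g\) has compact support away from \(0\) and \(\infty\), so
\[
 \int g^2 r^{\alpha-2}\,dr=-\frac{2}{\alpha-1}\int g g' r^{\alpha-1}\,dr
 \le \frac{2}{\alpha-1}\Bigl(\int g^2r^{\alpha-2}\Bigr)^{1/2}\Bigl(\int |g'|^2 r^{\alpha}\Bigr)^{1/2}.
\]
Here the weight \(r^{\alpha-1}\) is split as \(r^{(\alpha-2)/2}\cdot r^{\alpha/2}\) before applying Cauchy--Schwarz. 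Dividing by the finite left factor and squaring gives the constant \(4/(\alpha-1)^2\). The hypothesis \(\alpha>1\) is used only for the sign and nonvanishing of \(\alpha-1\).

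For sharpness I will use near-extremizers modeled on the formal extremal \(g=r^{-(\alpha-1)/2}\), which makes the Cauchy--Schwarz step an equality but lies in neither weighted space, since both integrands behave like \(r^{-1}\). The test functions are
\[
 h(r,z)=\psi(z)\,g_R(r),\qquad g_R(r)=r^{-(\alpha-1)/2}\chi_R(\log r),
\]
where \(\psi\in C_c^\infty(\R)\) is fixed and \(\chi_R\) is a plateau cutoff: equal to \(1\) on \([-R,R]\), supported in \([-R-1,R+1]\), with bounded derivative. Changing variables \(s=\log r\), the left side becomes \(\|\psi\|_2^2\int\chi_R^2\,ds\sim 2R\|\psi\|_2^2\). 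On the plateau \(g_R'=-\frac{\alpha-1}{2}r^{-(\alpha+1)/2}\), so the right integrand contributes \(\frac{(\alpha-1)^2}{4}\cdot 2R\), and the cutoff regions contribute only \(O(1)\). The ratio of the two sides therefore tends to \(\frac{4}{(\alpha-1)^2}\) as \(R\to\infty\), so no smaller constant works.

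The only step needing care is the sharpness bookkeeping, namely checking that the cross terms from differentiating \(\chi_R\) are \(O(1)\) while the main terms grow linearly in \(R\). This is routine once everything is written in the logarithmic variable \(s=\log r\).
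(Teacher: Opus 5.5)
Your proposal is correct and follows the paper's proof. The paper also integrates by parts in \(r\) on each fiber (writing \(r^{\alpha-2}=\frac{1}{\alpha-1}\partial_r(r^{\alpha-1})\)), applies Cauchy--Schwarz, and proves sharpness with truncations of \(r^{-(\alpha-1)/2}\) in which both sides grow like the length of the logarithmic plateau. Your cutoff in \(s=\log r\) over \([-R,R]\) plays the role of the paper's \(\chi(r)\psi(r/R)\) on \([1,R]\).

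Two small points are in your favour. Your explicit factor \(\psi(z)\) makes the test functions genuinely compactly supported in \((0,\infty)\times\R\), which the paper leaves implicit. Also, the cross term \(-(\alpha-1)\int\chi_R\chi_R'\,ds\) vanishes exactly, not just up to \(O(1)\).
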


\begin{proof}
For each fixed \(z\), integration by parts gives
\[
 (\alpha-1)\int_0^\infty h^2r^{\alpha-2}dr
 =-2\int_0^\infty hh_rr^{\alpha-1}dr.
\]
Cauchy--Schwarz gives
\[
 (\alpha-1)M^{1/2}\le 2R^{1/2},
 \qquad
 M=\int_0^\infty h^2r^{\alpha-2}dr,\quad
 R=\int_0^\infty |h_r|^2r^\alpha dr,
\]
which is \eqref{eq:hardy} after integration in \(z\).

We record an explicit sharpness sequence because it will also be used below.  Put
\(\beta=(\alpha-1)/2\).  Choose smooth cutoffs \(\chi,\psi\) with
\(\chi=0\) on \(0,1/2]\), \(\chi=1\) on \([1,\infty)\),
\(\psi=1\) on \([0,1]\), and \(\psi=0\) on \([2,\infty)\).  For \(R\ge4\), set
\[
 h_R(r)=r^{-\beta}\chi(r)\psi(r/R).
\]
On \([1,R]\) the function is exactly \(r^{-\beta}\); the two cutoff regions contribute only \(O(1)\).  Hence
\begin{align*}
 \int_0^\infty h_R^2r^{\alpha-2}dr&=\log R+O(1),\\
 \int_0^\infty |h_R'|^2r^\alpha dr&=\beta^2\log R+O(1).
\end{align*}
Therefore the quotient tends to \(\beta^{-2}=4/(\alpha-1)^2\), proving sharpness.
\end{proof}

\section{The power-weight spectrum for the swirl rate}

For \(p>1\) define
\[
 Z_{\alpha,p}(t)=\int |F|^p\,d\mu_\alpha,
 \qquad H=|F|^{p/2}.
\]

\begin{theorem}\label{thm:Fidentity}
Let \(p>1\) and \(\alpha>1\).  Then every smooth decaying axisymmetric solution satisfies
\begin{align}
 \frac1p\frac{d}{dt}Z_{\alpha,p}
 &+\nu\frac{4(p-1)}{p^2}\int|\nabla H|^2\,d\mu_\alpha\nonumber\\
 &+\nu\frac{(3-\alpha)(\alpha-1)}p
 \int H^2r^{\alpha-2}\,dr\,dz
 =\frac{\alpha-(2p+1)}p\int U|F|^p\,d\mu_\alpha.
 \label{eq:Fweighted}
\end{align}
In particular, \(\alpha=2p+1\) is the unique power weight for which the radial-strain term vanishes.
\end{theorem}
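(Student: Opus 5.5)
Multiply \eqref{eq:F} by \(|F|^{p-2}F\) and integrate against \(d\mu_\alpha\); for \(1<p<2\), test instead with \(\Phi_\varepsilon'(F)/p\) and pass to the limit exactly as in the proof of \cref{lem:diffusion}. The time-derivative term gives \(\frac1p\frac{d}{dt}Z_{\alpha,p}\). The diffusion term is handled directly by \cref{lem:diffusion} with \(f=F\). After moving it to the left-hand side, it produces the two terms
\[
\nu\tfrac{4(p-1)}{p^2}\int|\nabla H|^2\,d\mu_\alpha
\quad\text{and}\quad
\nu\tfrac{(3-\alpha)(\alpha-1)}{p}\int H^2r^{\alpha-2}\,dr\,dz .
\]
So the only real computation is the transport term and the stretching term.

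For the transport term, write \(|F|^{p-2}F\,(u^rF_r+u^zF_z)=\frac1p\,b\cdot\nabla|F|^p\) with \(b=(u^r,u^z)\). Integrating by parts against \(r^\alpha\) and using the first identity of \cref{lem:weighteddiv} gives
\[
\int |F|^{p-2}F\,(b\cdot\nabla F)\,d\mu_\alpha=-\frac{\alpha-1}{p}\int U|F|^p\,d\mu_\alpha .
\]
This is the \(|F|^p\) analogue of the second identity in that lemma, proved by the same argument with \(h^2/2\) replaced by \(|F|^p/p\). Moving it to the right-hand side gives \(+\frac{\alpha-1}{p}\int U|F|^p\,d\mu_\alpha\). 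The stretching term \(-2UF\) contributes \(-2\int U|F|^p\,d\mu_\alpha\). The total coefficient is therefore
\[
\frac{\alpha-1}{p}-2=\frac{\alpha-(2p+1)}{p},
\]
which is \eqref{eq:Fweighted}. Uniqueness of the vanishing weight is then immediate: the coefficient is affine in \(\alpha\) with nonzero slope \(1/p\), so it vanishes exactly at \(\alpha=2p+1\).

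The main obstacle is justifying the boundary terms, not the algebra. At the axis, the integration by parts in the transport term produces the boundary term \(r^\alpha u^r|F|^p\) at \(r=0\). Since \(u^r=O(r)\) and \(F=O(1)\), this term is \(O(r^{\alpha+1})\to0\), comfortably so for \(\alpha>1\). The axis terms in the diffusion part are already handled in \cref{lem:diffusion}. At infinity I would use the standing cutoff convention: insert \(\eta_R(r,z)\), and note that the commutator terms are bounded by \(R^{-1}\) times integrals of \(|u||F|^p r^\alpha\) and \(|\nabla H|\,H\,r^\alpha\) over the annulus, which vanish by the assumed decay and finiteness. For \(1<p<2\), the renormalized transport identity uses \(\frac1p\,b\cdot\nabla\Phi_\varepsilon(F)\) and the stretching term becomes \(-2U F\Phi_\varepsilon'(F)/p\). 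The bounds \(\Phi_\varepsilon\le|s|^p\) and \(s\Phi_\varepsilon'/p\le|s|^p\) give dominated convergence for these terms, since \(U\) is bounded for smooth solutions, and this recovers the same coefficients in the limit.
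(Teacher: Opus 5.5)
Your proposal is correct and follows the paper's proof essentially step for step. You test with \(|F|^{p-2}F\,r^\alpha\) (renormalized via \(\Phi_\varepsilon'(F)r^\alpha/p\) when \(1<p<2\)), take the diffusion terms from the diffusion lemma, and use the weighted divergence identity to get the transport contribution \(-\frac{\alpha-1}{p}\int U|F|^p\,d\mu_\alpha\), which combines with the stretching term into the coefficient \(\frac{\alpha-1}{p}-2=\frac{\alpha-(2p+1)}{p}\); your remarks on axis and cutoff boundary terms and on dominated convergence in the renormalized limit just make explicit what the paper leaves to its standing justification convention.
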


\begin{proof}
For \(p\ge2\), multiply \eqref{eq:F} by \(|F|^{p-2}F r^\alpha\) and integrate.  For \(1<p<2\), use instead the multiplier \(\Phi_\varepsilon'(F)r^\alpha/p\) from \Cref{lem:diffusion} and pass to the limit.  In either case, by \Cref{lem:weighteddiv}, the transport contributes
\[
 -\frac{\alpha-1}{p}\int U|F|^p\,d\mu_\alpha.
\]
The explicit stretching term contributes \(-2\int U|F|^p\,d\mu_\alpha\).  Moving the transport contribution to the right gives the coefficient
\[
 \frac{\alpha-1}{p}-2=\frac{\alpha-(2p+1)}p.
\]
The diffusion terms are exactly \Cref{lem:diffusion}.
\end{proof}

\begin{remark}\label{rem:parameter-dictionary}
The parameter \(\alpha\) translates directly into two standard weighted-swirl parametrizations.  Since the three-dimensional volume element is \(2\pi r\,dr\,dz\),
\[
 \|r^d u^\theta\|_{L^p(\mathbb R^3)}^p
 \simeq \int |F|^p r^{p(d+1)+1}\,dr\,dz.
\]
Thus
\begin{equation}\label{eq:d-dictionary}
 \alpha=p(d+1)+1,
 \qquad
 d=\frac{\alpha-p-1}{p}.
\end{equation}
The critical line \(\alpha=2p+1\) is therefore exactly \(d=1\), the endpoint at which the Hardy-based method in \cite{ChenFangZhang2017} is reported to fail.  Likewise, for the weighted circulation variable of \cite{RencZaj2019},
\[
 u_a=\frac{\Gamma}{r^a}=r^{1-a}u^\theta,
 \qquad
 \alpha=2p+1-ap.
\]
Their scale-balanced condition \(ap=3\) is exactly our scale-invariant line \(\alpha=2p-2\).  Hence the two distinguished lines in the phase diagram already have familiar interpretations in the earlier weighted-swirl notation; what is added below is their sharp coercive separation and endpoint classification.
\end{remark}

Define the full diffusion form
\[
 \mathcal D_{\alpha,p}[H]
 =\frac{4(p-1)}{p^2}\int |\nabla H|^2r^\alpha\,dr\,dz
 +\frac{(3-\alpha)(\alpha-1)}p
 \int H^2r^{\alpha-2}\,dr\,dz.
\]

\begin{theorem}\label{thm:phase}
Fix \(p>1\).
\begin{enumerate}[label=\textnormal{(\roman*)}]
\item If \(1<\alpha\le3\), then
\[
 \mathcal D_{\alpha,p}[H]
 \ge \frac{4(p-1)}{p^2}\int|\nabla H|^2r^\alpha\,dr\,dz.
\]
The coefficient is sharp.
\item If \(3<\alpha<2p+1\), then
\begin{equation}\label{eq:radgap}
 \mathcal D_{\alpha,p}[H]
 \ge c_{\alpha,p}\int|\nabla H|^2r^\alpha\,dr\,dz,
 \qquad
 c_{\alpha,p}=\frac{4(2p+1-\alpha)}{p^2(\alpha-1)}.
\end{equation}
The constant \(c_{\alpha,p}\) is sharp.
\item If \(\alpha=2p+1\), then \(\mathcal D_{\alpha,p}\ge0\), but there is no constant \(c>0\) such that
\[
 \mathcal D_{2p+1,p}[H]\ge c\int|\nabla H|^2r^{2p+1}\,dr\,dz
\]
for all smooth compactly supported \(H\).
\item If \(\alpha>2p+1\), the full form \(\mathcal D_{\alpha,p}\) is indefinite: there exists smooth compactly supported \(H\) with \(\mathcal D_{\alpha,p}[H]<0\).
\end{enumerate}
Therefore no power weight can both cancel the radial-strain term in \eqref{eq:Fweighted} and retain a positive uniform coercive gap in the full weighted gradient of \(|F|^{p/2}\).
\end{theorem}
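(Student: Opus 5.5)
The plan is to reduce everything to the radial Hardy inequality of \Cref{lem:hardy}, applied with the same \(\alpha\), and to reuse its sharpness sequence \(h_R\).

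Write \(A=\int|\nabla H|^2r^\alpha\), \(A_r=\int|H_r|^2r^\alpha\le A\) and \(M=\int H^2r^{\alpha-2}\). Set \(a=4(p-1)/p^2\) and \(b=(3-\alpha)(\alpha-1)/p\).

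For (i), \(b\ge0\) when \(1<\alpha\le3\), so \(\mathcal D\ge aA\) at once. For sharpness I need test functions with \(M/A\to0\). I would take \(H(r,z)=\phi(r)\eta(z/L)\) with \(\phi\) a fixed bump supported in \([1,2]\) and \(L\to\infty\). Then \(A\) grows like \(L\), driven by the \(H_r\) part, while \(M\) also grows like \(L\), so this choice does not make \(M/A\to0\). Instead I would dilate radially: \(\phi(r/\lambda)\) with \(\lambda\to\infty\) and \(z\)-profile \(\eta(z/\lambda)\). The gradient term then scales like \(\lambda^{\alpha-1}\), and \(M\) scales like \(\lambda^{\alpha-1}\) too, so the ratio is again fixed. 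What works is to concentrate in \(z\): take \(\eta(z/\epsilon)\) with \(\epsilon\to0\). Then \(A\sim\epsilon^{-1}\) while \(M\sim\epsilon\), so \(\mathcal D/A\to a\).

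For (ii), \(b<0\). By Hardy, \(M\le \frac{4}{(\alpha-1)^2}A_r\le\frac4{(\alpha-1)^2}A\), hence
\[
\mathcal D\ge \Bigl(a-\frac{4(\alpha-3)}{p(\alpha-1)}\Bigr)A .
\]
A short algebraic check gives
\[
\frac{4(p-1)(\alpha-1)-4p(\alpha-3)}{p^2(\alpha-1)}=\frac{4(2p+1-\alpha)}{p^2(\alpha-1)}=c_{\alpha,p}.
\]
For sharpness I would take \(H=h_R(r)\eta(z/L)\), with \(h_R\) from the proof of \Cref{lem:hardy} and a fixed bump \(\eta\). I send \(R\to\infty\) first and then \(L\to\infty\). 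The \(z\)-derivative then contributes \(O(L^{-1}\log R)\) relative to \(L\log R\), so \(A_r/A\to1\) and \(M/A\to 4/(\alpha-1)^2\), and \(\mathcal D/A\to c_{\alpha,p}\). The main bookkeeping is to verify that the order of limits works. Since both leading terms scale like \(L\log R\), letting \(L\to\infty\) after dividing removes the \(z\)-gradient error, and the \(O(1)\) cutoff errors disappear as \(R\to\infty\).

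For (iii), nonnegativity is the case \(c_{2p+1,p}=0\) of the same computation. Failure of a uniform gap follows because the test family from (ii) gives \(\mathcal D/A\to0\) at \(\alpha=2p+1\). For (iv), the same family gives \(\mathcal D/A\to c_{\alpha,p}<0\), so for large \(R,L\) some compactly supported \(H\) has \(\mathcal D[H]<0\). The function \(h_R\) vanishes near \(r=0\), so these test functions are genuinely compactly supported in \((0,\infty)\times\R\).

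The final sentence of the theorem combines (iii) and (iv) with the uniqueness statement in \Cref{thm:Fidentity}. The radial-strain term cancels only at \(\alpha=2p+1\), and there no gap exists.
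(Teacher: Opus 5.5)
Your lower bounds are correct. Applying Hardy and then $A_r\le A$ gives the same constant as the paper's separate comparison of radial and axial coefficients. Concentrating the axial profile, $\eta(z/\epsilon)$, proves sharpness in (i) just as well as the paper's axial oscillation.

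The gap is in the test-function bookkeeping for (ii)--(iv), which you yourself flagged as the main point. For $H=h_R(r)\eta(z/L)$ the axial term is
\[
 A_z:=\int|H_z|^2r^\alpha\,dr\,dz=L^{-1}\|\eta'\|_{L^2}^2\int_0^\infty h_R^2r^\alpha\,dr .
\]
On the plateau $h_R^2r^\alpha=r^{1-\alpha}r^\alpha=r$, so $\int h_R^2r^\alpha\,dr\asymp R^2$, not $O(\log R)$. Hence $A_z/A_r\asymp R^2/(L^2\log R)$, which is not uniform in $R$. In your order ($R\to\infty$ at fixed $L$, then $L\to\infty$), the inner limit already gives $A_r/A\to0$ and $M/A\to0$, so $\mathcal D/A\to4(p-1)/p^2$, not $c_{\alpha,p}$. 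That limit proves neither sharpness in (ii) nor loss of the gap at $\alpha=2p+1$. For $\alpha>2p+1$ it even gives $\mathcal D[H]>0$ for large $R$ at fixed $L$.

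The tensor-product construction is the right one, and it is the paper's, but the limits must be reversed or coupled. For fixed $R$, letting $L\to\infty$ gives $A_z/A_r=O(L^{-2})$ and
\[
 \lim_{L\to\infty}\frac{\mathcal D[H]}{A}=a+b\,\frac{\int h_R^2r^{\alpha-2}\,dr}{\int|h_R'|^2r^\alpha\,dr},
\]
which tends to $a+4b/(\alpha-1)^2=c_{\alpha,p}$ as $R\to\infty$. Alternatively, take any $L=L(R)$ with $L^2\log R/R^2\to\infty$. The paper's choice $L=R$ gives $A_z/A_r=O(1/\log R)$.

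For (iv) this means the following order. First fix $R$ so large that $a\int|h_R'|^2r^\alpha\,dr+b\int h_R^2r^{\alpha-2}\,dr<0$. This is possible because the quantity equals $\beta^2c_{\alpha,p}\log R+O(1)$ with $\beta=(\alpha-1)/2$. Then take $L$ large enough that the positive axial term, of size $R^2/L$, is dominated by the negative radial part, of size $L\log R$. With this correction your argument coincides with the paper's.
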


\begin{proof}
For \(1<\alpha\le3\), the zero-order coefficient is nonnegative, giving (i).  Sharpness follows by taking a fixed nonzero compact radial cutoff and increasingly oscillatory compactly supported functions in the \(z\)-variable; the axial derivative then dominates all lower-order terms, so the quotient tends to \(4(p-1)/p^2\).

Assume now \(\alpha>3\).  Hardy's inequality gives
\[
 \int H^2r^{\alpha-2}\le\frac4{(\alpha-1)^2}
 \int|H_r|^2r^\alpha.
\]
Since the zero-order coefficient is negative,
\begin{align*}
 \mathcal D_{\alpha,p}[H]
 &\ge
 \frac{4(p-1)}{p^2}\int|H_z|^2r^\alpha
 +\frac{4(2p+1-\alpha)}{p^2(\alpha-1)}
  \int|H_r|^2r^\alpha.
\end{align*}
For \(\alpha\ge3\), the second coefficient is no larger than the first because
\[
 \frac{4(p-1)}{p^2}-c_{\alpha,p}
 =\frac{4(\alpha-3)}{p(\alpha-1)}\ge0.
\]
This proves \eqref{eq:radgap} for \(3<\alpha<2p+1\) and nonnegativity at the endpoint.

To prove sharpness and the endpoint statements, let \(h_R\) be the radial Hardy near-extremizers from \Cref{lem:hardy}, and set
\[
 H_{R,L}(r,z)=h_R(r)\eta(z/L),
 \qquad 0\ne\eta\in C_c^\infty(\R).
\]
The radial derivative and zero-order pieces are multiplied by \(L\int\eta^2\), while the axial derivative is multiplied by \(L^{-1}\int|\eta'|^2\).  On the Hardy plateau one has
\[
 \int h_R^2r^\alpha\,dr=O(R^2),
 \qquad
 \int |h_R'|^2r^\alpha\,dr\asymp\log R.
\]
Choosing, for example, \(L=R\), makes the axial derivative negligible relative to the radial derivative as \(R\to\infty\).  The quotient of the full form by the full gradient therefore tends to
\[
 \frac{4(2p+1-\alpha)}{p^2(\alpha-1)}.
\]
This proves sharpness in (ii), loss of every positive full-gradient gap in (iii), and negativity of the full diffusion form for all sufficiently large \(R\) when \(\alpha>2p+1\).
\end{proof}

\begin{corollary}\label{cor:ratio}
For \(3\le\alpha<2p+1\), the magnitude of the inward-compression coefficient in \eqref{eq:Fweighted} divided by the sharp radial coercive coefficient \eqref{eq:radgap} is
\begin{equation}\label{eq:ratio}
 \frac{(2p+1-\alpha)/p}{c_{\alpha,p}}
 =\frac{p(\alpha-1)}4.
\end{equation}
Hence, among powers \(\alpha\ge3\) (the range in which the coupled \(G\)-source pairing has no negative radial factor), moving the weight toward the stretching-cancellation endpoint never improves the compression-to-coercivity ratio; the ratio is minimized at \(\alpha=3\).
\end{corollary}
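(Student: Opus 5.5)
The argument is a direct algebraic computation on top of \Cref{thm:Fidentity} and \Cref{thm:phase}(ii). Throughout, fix \(p>1\) and \(3\le\alpha<2p+1\).

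First, identify the two coefficients being compared. By \eqref{eq:Fweighted}, the coefficient of \(\int U|F|^p\,d\mu_\alpha\) is \((\alpha-(2p+1))/p\). In the stated range this is negative, so its magnitude is \((2p+1-\alpha)/p\). The sharp coercive constant is \(c_{\alpha,p}=4(2p+1-\alpha)/(p^2(\alpha-1))\), taken from \eqref{eq:radgap}. At the endpoint \(\alpha=3\) I use the same formula, since \(c_{3,p}=4(p-1)/p^2\) agrees with the sharp coefficient in \Cref{thm:phase}(i). This should be noted explicitly so that the closed range \(3\le\alpha\) is covered.

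Second, form the quotient. Both the numerator \((2p+1-\alpha)/p\) and \(c_{\alpha,p}\) carry the factor \(2p+1-\alpha\), which is strictly positive because \(\alpha<2p+1\). Cancelling it gives
\[
\frac{(2p+1-\alpha)/p}{c_{\alpha,p}}=\frac{1}{p}\cdot\frac{p^2(\alpha-1)}{4}=\frac{p(\alpha-1)}{4},
\]
which is \eqref{eq:ratio}.

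Third, draw the monotonicity conclusion. For fixed \(p\), the function \(\alpha\mapsto p(\alpha-1)/4\) is strictly increasing. On \([3,2p+1)\) it is therefore minimized at \(\alpha=3\), where it equals \(p/2\). Moving \(\alpha\) toward the cancellation endpoint \(2p+1\) strictly increases the ratio, with supremum \(p^2/2\) approached but not attained. The parenthetical remark about the \(G\)-source pairing is only a justification for restricting to \(\alpha\ge3\) and needs no proof here.

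The only delicate point is interpretive rather than technical. As \(\alpha\to 2p+1\) both the numerator and the denominator tend to zero, yet the ratio stays finite. The statement "never improves" must therefore be read in terms of this ratio, not the absolute size of the compression coefficient, and I will state that reading explicitly.
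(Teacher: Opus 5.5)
Your proposal is correct and follows the paper's proof: both cancel the common factor \(2p+1-\alpha\) between the compression coefficient and \(c_{\alpha,p}\) from \eqref{eq:radgap}, then note that \(p(\alpha-1)/4\) is strictly increasing in \(\alpha\). Your explicit check that \(c_{3,p}=4(p-1)/p^2\) agrees with the sharp constant of \Cref{thm:phase}(i) is a worthwhile addition, since \eqref{eq:radgap} is only stated for \(3<\alpha<2p+1\).
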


\begin{proof}
This is direct algebra using \eqref{eq:radgap}.  The right-hand side of \eqref{eq:ratio} is strictly increasing in \(\alpha\).
\end{proof}

\begin{corollary}\label{cor:F-formbound}
Let \(3\le\alpha<2p+1\) and suppose the inward radial strain obeys the weighted form bound
\begin{equation}\label{eq:F-formbound}
 \int U_-H^2r^\alpha\,dr\,dz
 \le \Lambda\int|H_r|^2r^\alpha\,dr\,dz,
 \qquad U_-=(-U)_+.
\end{equation}
Then the compressive term in \eqref{eq:Fweighted} is strictly absorbed by radial diffusion whenever
\begin{equation}\label{eq:F-formbound-threshold}
 \Lambda<\frac{4\nu}{p(\alpha-1)}.
\end{equation}
Among Hardy-coercive powers \(3\le\alpha<2p+1\), the admissible constant in \eqref{eq:F-formbound-threshold} is maximal at \(\alpha=3\), where it equals \(2\nu/p\).
\end{corollary}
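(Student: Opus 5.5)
The plan is to start from the exact identity \eqref{eq:Fweighted} of \Cref{thm:Fidentity} and treat its right-hand side as the only possible source of growth. Since \(\alpha<2p+1\), the coefficient \((\alpha-(2p+1))/p\) is negative. Write \(U=U_+-U_-\). The \(U_+\) part of the right-hand side then has a favourable sign and can simply be dropped. What remains is the compressive contribution
\[
 \frac{2p+1-\alpha}{p}\int U_-|F|^p\,d\mu_\alpha
 =\frac{2p+1-\alpha}{p}\int U_-H^2 r^\alpha\,dr\,dz ,
\]
and it must be dominated by the diffusion form \(\nu\mathcal D_{\alpha,p}[H]\).

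First I bound the diffusion from below, using the computation already made in the proof of \Cref{thm:phase}. For \(3\le\alpha<2p+1\), Hardy's inequality (\Cref{lem:hardy}) together with the negative sign of the zero-order coefficient gives
\[
 \mathcal D_{\alpha,p}[H]\ge \frac{4(p-1)}{p^2}\int|H_z|^2r^\alpha+c_{\alpha,p}\int|H_r|^2r^\alpha ,
\]
with \(c_{\alpha,p}=4(2p+1-\alpha)/(p^2(\alpha-1))\). At \(\alpha=3\) no Hardy step is needed, because the zero-order term vanishes and the radial coefficient is \(4(p-1)/p^2=c_{3,p}\).

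Next I insert the form bound \eqref{eq:F-formbound} into the compressive term, which gives
\[
 \frac{2p+1-\alpha}{p}\int U_-H^2r^\alpha\le \frac{2p+1-\alpha}{p}\,\Lambda\int|H_r|^2r^\alpha .
\]
This is strictly absorbed by \(\nu c_{\alpha,p}\int|H_r|^2r^\alpha\) precisely when
\[
 \Lambda<\nu c_{\alpha,p}\cdot\frac{p}{2p+1-\alpha},
\]
that is, when \(\Lambda\) is less than \(\nu\) divided by the ratio in \Cref{cor:ratio}:
\[
 \Lambda<\frac{4\nu}{p(\alpha-1)} .
\]
This is \eqref{eq:F-formbound-threshold}. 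After absorption, a positive multiple of \(\int|H_r|^2r^\alpha\) and the full axial term survive on the left, so the absorption is strict.

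Finally, the threshold \(4\nu/(p(\alpha-1))\) is strictly decreasing in \(\alpha\). Its maximum over \([3,2p+1)\) is therefore attained at \(\alpha=3\), where it equals \(2\nu/p\).

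There is no real obstacle here. The one point needing care is to interpret "strictly absorbed" as absorption into the sharp radial coefficient \(c_{\alpha,p}\) of \Cref{thm:phase}. That coefficient is the best possible one, by the sharpness part (ii), so the threshold cannot be improved by this route. The endpoint \(\alpha=3\) is included by the direct observation that the zero-order term vanishes there, rather than through Hardy's inequality.
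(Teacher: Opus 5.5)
Your proposal is correct and follows essentially the same route as the paper: drop the favourable \(U_+\) part of the right-hand side of \eqref{eq:Fweighted}, bound the compressive \(U_-\) part by the form bound, and absorb it into the sharp radial gap \(\nu c_{\alpha,p}\), which gives the threshold \(4\nu/(p(\alpha-1))\) and its strict decrease in \(\alpha\). Your separate check that \(c_{3,p}=4(p-1)/p^2\) at the endpoint \(\alpha=3\) is a small point of care that the paper leaves implicit.
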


\begin{proof}
For \(\alpha<2p+1\), the right-hand side of \eqref{eq:Fweighted} has dangerous part at most
\[
 \frac{2p+1-\alpha}{p}\Lambda
 \int |H_r|^2r^\alpha\,dr\,dz.
\]
The sharp radial diffusion gap is \(\nu c_{\alpha,p}\) from \eqref{eq:radgap}.  The inequality
\((2p+1-\alpha)\Lambda/p<\nu c_{\alpha,p}\) is exactly \eqref{eq:F-formbound-threshold}.  Its right-hand side decreases strictly with \(\alpha\).
\end{proof}

\begin{corollary}\label{cor:scaling-gap}
Under the Navier--Stokes scaling,
\(F_\lambda(r,z,t)=\lambda^2F(\lambda r,\lambda z,\lambda^2t)\), the weighted quantity \(Z_{\alpha,p}\) scales as
\[
 Z_{\alpha,p}[F_\lambda](t)
 =\lambda^{2p-\alpha-2}Z_{\alpha,p}[F](\lambda^2t).
\]
Thus the scale-invariant power is \(\alpha_{\rm sc}=2p-2\), whereas the stretching-free power is \(\alpha_{\rm can}=2p+1\).  They are separated by exactly three radial powers:
\[
 \alpha_{\rm can}-\alpha_{\rm sc}=3.
\]
In particular, no scale-invariant power-weighted \(F^p\) norm can also cancel the radial strain.
\end{corollary}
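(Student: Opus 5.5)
The plan is a direct change of variables in the integral defining \(Z_{\alpha,p}\), followed by reading off the exponent and comparing it with \Cref{thm:Fidentity}.

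First I would justify the scaling of \(F\). The Navier--Stokes rescaling \(u_\lambda(x,t)=\lambda u(\lambda x,\lambda^2t)\) preserves axisymmetry. It acts on cylindrical components by \(u^\theta_\lambda(r,z,t)=\lambda u^\theta(\lambda r,\lambda z,\lambda^2 t)\). Dividing by \(r\) and writing \(1/r=\lambda/(\lambda r)\) gives
\[
F_\lambda(r,z,t)=\lambda^2F(\lambda r,\lambda z,\lambda^2t),
\]
which is the stated formula. I would also note that \(F_\lambda\) again solves \eqref{eq:F}, since every term there carries total weight \(\lambda^4\). Thus the rescaled swirl rate belongs to the same family.

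Next I would compute
\[
Z_{\alpha,p}[F_\lambda](t)=\int \lambda^{2p}|F(\lambda r,\lambda z,\lambda^2t)|^p r^\alpha\,dr\,dz .
\]
I substitute \(r'=\lambda r\) and \(z'=\lambda z\). Then \(r^\alpha=\lambda^{-\alpha}r'^\alpha\) and \(dr\,dz=\lambda^{-2}dr'\,dz'\), so the total factor is \(\lambda^{2p-\alpha-2}\). This proves the displayed identity.

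Scale invariance means the exponent vanishes, which gives \(\alpha_{\rm sc}=2p-2\). \Cref{thm:Fidentity} identifies the unique stretching-free power as \(\alpha_{\rm can}=2p+1\). Their difference is \(3\) for every \(p\).

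The final clause follows because \(2p-2\neq 2p+1\). A scale-invariant weight therefore has strain coefficient \((\alpha-(2p+1))/p=-3/p\neq0\) in \eqref{eq:Fweighted}.

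There is no real obstacle here. The only point requiring care is the convention that \(\lambda\) enters the arguments as \(\lambda r\), so that the Jacobian exponent has the correct sign.
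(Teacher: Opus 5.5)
Your proposal is correct and follows the paper's proof essentially verbatim: the substitution \(r'=\lambda r\), \(z'=\lambda z\) produces the factor \(\lambda^{2p}\lambda^{-\alpha}\lambda^{-2}\), and the remaining claims are algebra read off against \Cref{thm:Fidentity}. Your extra check, that \(F_\lambda\) arises from the rescaled velocity and again solves \eqref{eq:F}, is a harmless addition that the paper leaves implicit.
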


\begin{proof}
The change of variables \(r'=\lambda r\), \(z'=\lambda z\) gives the scaling factor \(\lambda^{2p}\lambda^{-\alpha}\lambda^{-2}\).  The remaining assertions are algebraic.
\end{proof}

\section{Conjugation to weighted circulation energies}

The complete power-weight family has a simple circulation interpretation, not only at the cancellation endpoint.  This change of variables also explains why the endpoint itself is classical rather than a new a priori estimate.  Put
\[
 \beta=\alpha-2p,
 \qquad Q=|\Gamma|^{p/2}.
\]
Since \(\Gamma=r^2F\),
\begin{equation}\label{eq:general-energy-conjugate}
 \int |F|^pr^\alpha\,dr\,dz
 =\int |\Gamma|^pr^\beta\,dr\,dz.
\end{equation}

\begin{theorem}\label{thm:Gammaweighted}
Let \(p>1\), \(\alpha>1\), and \(\beta=\alpha-2p\).  Under the same finiteness assumptions as in \Cref{thm:Fidentity},
\begin{align}
 \frac1p\frac d{dt}\int|\Gamma|^pr^\beta
 &+\nu\frac{4(p-1)}{p^2}\int|\nabla Q|^2r^\beta\nonumber\\
 &-\nu\frac{(\beta-1)(\beta+1)}p
   \int Q^2r^{\beta-2}
 =\frac{\beta-1}{p}\int U|\Gamma|^pr^\beta.
 \label{eq:Gammaweighted-general}
\end{align}
The identity is exactly equivalent to \eqref{eq:Fweighted} under \(\Gamma=r^2F\).  In particular, cancellation of radial strain occurs precisely when \(\beta=1\), i.e. when \(\alpha=2p+1\).
\end{theorem}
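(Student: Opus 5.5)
The plan is to obtain \eqref{eq:Gammaweighted-general} from \eqref{eq:Fweighted} by a pure change of variables, not by testing \eqref{eq:Gamma} again. This also gives the claimed exact equivalence, since each step is reversible. The substitution is \(\Gamma=r^2F\), so \(Q=|\Gamma|^{p/2}=r^pH\) and \(H=r^{-p}Q\), with \(\alpha=\beta+2p\).

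First I match the terms that need no computation. By \eqref{eq:general-energy-conjugate} the time-derivative terms agree. The strain term transforms directly: \(|F|^pr^\alpha=|\Gamma|^pr^\beta\) and \(\alpha-(2p+1)=\beta-1\), so the right-hand sides coincide. The zero-order Hardy term also matches directly, since \(H^2r^{\alpha-2}=Q^2r^{\beta-2}\).

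Next I rewrite the gradient term. From \(\nabla H=r^{-p}\nabla Q-p\,r^{-p-1}Q\,e_r\) I get
\[
 |\nabla H|^2r^\alpha=|\nabla Q|^2r^\beta-p\,\partial_r(Q^2)\,r^{\beta-1}+p^2Q^2r^{\beta-2}.
\]
Integrating the middle term by parts in \(r\) for fixed \(z\) turns it into \(p(\beta-1)\int Q^2r^{\beta-2}\). Hence
\[
 \int|\nabla H|^2r^\alpha=\int|\nabla Q|^2r^\beta+p(p+\beta-1)\int Q^2r^{\beta-2}.
\]
The axis boundary term is \(pQ^2r^{\beta-1}=pH^2r^{\alpha-1}\), which vanishes at \(r=0\) because \(\alpha>1\) and \(H\) is bounded there. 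At infinity it vanishes by decay, or by the cutoff argument already used in the paper.

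Collecting the \(Q^2r^{\beta-2}\) coefficients gives \(\tfrac1p\big[4(p-1)(p+\beta-1)+(3-s)(s-1)\big]\) with \(s=\beta+2p\). Expanding,
\[
 4(p-1)(p+\beta-1)=4p^2+4p\beta-8p-4\beta+4,
\]
\[
 (3-s)(s-1)=-\beta^2-4p\beta-4p^2+4\beta+8p-3.
\]
Their sum is \(1-\beta^2=-(\beta-1)(\beta+1)\), which is exactly the coefficient in \eqref{eq:Gammaweighted-general}. The cancellation statement then follows from the coefficient \(\beta-1\): it is zero if and only if \(\beta=1\), i.e. \(\alpha=2p+1\).

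The main obstacle is justification rather than algebra. The integration by parts above splits \(\int|\nabla H|^2r^\alpha\) into pieces that are individually finite only under suitable finiteness assumptions. For \(1<p<2\), \(Q\) is not smooth at the zero set of \(\Gamma\). I would handle this by performing the conjugation at the level of the renormalized quantities \(\Phi_\varepsilon\) from \Cref{lem:diffusion}, with radial and axial cutoffs, and then passing to the limit by monotone convergence. Since both the \(H\)-form and the \(Q\)-form are limits of the same cutoff expressions, the two identities stay equivalent whenever either side's weighted quantities are finite.
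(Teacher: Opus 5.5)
Your proposal is correct, and the algebra checks out. The pointwise expansion of \(|\nabla(r^{-p}Q)|^2r^{\beta+2p}\) holds, the axis term \(pH^2r^{\alpha-1}\) vanishes, and \(4(p-1)(p+\beta-1)+(3-\alpha)(\alpha-1)=1-\beta^2\) is right.

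It is a different route from the paper's. The paper derives the identity directly from the circulation equation \eqref{eq:Gamma}. It tests with \(|\Gamma|^{p-2}\Gamma r^\beta\), uses that the \(r^\beta\)-weighted divergence of \((u^r,u^z)\) is \((\beta-1)U\), and integrates the diffusion by parts. In that computation the drift \(-\frac1r\partial_r\) and the derivative of the weight combine into a factor \(\beta+1\), the last radial integration by parts supplies \(\beta-1\), and the axis term is removed using \(\Gamma=O(r^2)\). The paper then states the equivalence with \eqref{eq:Fweighted} only as a substitution; its gradient part is exactly the computation you carry out.

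Your route extends the endpoint conjugation of \Cref{thm:circulationendpoint} to every \(\beta\). This gains an actual proof of the exact-equivalence claim, in a stronger form: the two diffusion forms agree as functionals on any axis-compatible function with finite weighted quantities, not merely along solutions. The paper's route gains independence from \Cref{thm:Fidentity} and a transparent origin for the factor \((\beta-1)(\beta+1)\), which in your computation appears only through a polynomial cancellation.

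One simplification to your justification paragraph: the middle integration by parts needs no renormalization even for \(1<p<2\), because \(Q^2=|\Gamma|^p\) is \(C^1\). Renormalization is needed only for the gradient terms themselves, and you inherit that from \Cref{lem:diffusion}.
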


\begin{proof}
For \(p\ge2\), multiply the circulation equation \eqref{eq:Gamma} by \(|\Gamma|^{p-2}\Gamma r^\beta\); for \(1<p<2\), use the same convex renormalization as in \Cref{lem:diffusion}.  The weighted divergence of the meridional velocity is \((\beta-1)U\), so transport contributes \(-(\beta-1)/p\) times the weighted \(|\Gamma|^p\)-moment.  For diffusion,
\[
 \int |\Gamma|^{p-2}\Gamma
 \left(\Gamma_{rr}-\frac1r\Gamma_r+\Gamma_{zz}\right)r^\beta
 =-\frac{4(p-1)}{p^2}\int|\nabla Q|^2r^\beta
 +\frac{(\beta-1)(\beta+1)}p\int Q^2r^{\beta-2}.
\]
The boundary term at the axis vanishes even when \(\beta\le1\), because Cartesian regularity gives \(\Gamma=O(r^2)\) and hence
\(r^{\beta-1}|\Gamma|^p=O(r^{\alpha-1})\to0\).  Rearrangement gives \eqref{eq:Gammaweighted-general}.  Substituting \(\beta=\alpha-2p\) and \(\Gamma=r^2F\) recovers \eqref{eq:Fweighted}.
\end{proof}

\section{The cancellation endpoint is the physical-space circulation norm}

The previous endpoint has a concrete interpretation.  For \(p\ge2\), the resulting circulation estimate belongs to the classical a priori theory; see, for example, Chae--Lee \cite{ChaeLee2002}.  The same identity for \(1<p<2\) follows from the convex renormalization above.  We include the calculation because the sharp power-weight phase diagram lands on this circulation energy exactly.

\begin{theorem}\label{thm:circulationendpoint}
Let \(p>1\) and set \(\alpha=2p+1\).  Then
\begin{equation}\label{eq:energy-conjugate}
 \int |F|^p r^{2p+1}\,dr\,dz
 =\int |\Gamma|^p r\,dr\,dz,
\end{equation}
and
\begin{align}
 &\frac{4(p-1)}{p^2}\int|\nabla |F|^{p/2}|^2r^{2p+1}\,dr\,dz
 -4(p-1)\int |F|^p r^{2p-1}\,dr\,dz\nonumber\\
 &\hspace{35mm}=\frac{4(p-1)}{p^2}
 \int\left|\nabla |\Gamma|^{p/2}\right|^2r\,dr\,dz.
 \label{eq:diss-conjugate}
\end{align}
Consequently \eqref{eq:Fweighted} at \(\alpha=2p+1\) is exactly
\begin{equation}\label{eq:GammaLp}
 \frac1p\frac d{dt}\int|\Gamma|^p r\,dr\,dz
 +\nu\frac{4(p-1)}{p^2}
 \int\left|\nabla|\Gamma|^{p/2}\right|^2r\,dr\,dz=0,
\end{equation}
the classical \(L^p\) circulation identity.
\end{theorem}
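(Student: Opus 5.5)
The plan is to prove the three displayed statements in order, each by direct algebra with $\Gamma=r^2F$, and then combine them with \Cref{thm:Fidentity}.

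The energy identity \eqref{eq:energy-conjugate} is the case $\beta=1$ of \eqref{eq:general-energy-conjugate}. Indeed, $|\Gamma|^p r=|F|^p r^{2p}r=|F|^pr^{2p+1}$ pointwise, so nothing needs to be integrated by parts.

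For the dissipation identity \eqref{eq:diss-conjugate}, write $H=|F|^{p/2}$ and $Q=|\Gamma|^{p/2}=r^pH$. Then $Q_z=r^pH_z$ and $Q_r=r^pH_r+pr^{p-1}H$. Expanding $|\nabla Q|^2r$ gives
\[
 |\nabla Q|^2r=|\nabla H|^2r^{2p+1}+2p\,HH_r\,r^{2p}+p^2H^2r^{2p-1}.
\]
The cross term is $p\,\partial_r(H^2)\,r^{2p}$. Integrating it by parts in $r$ for fixed $z$ gives $-2p^2\int H^2r^{2p-1}$, so
\[
\int|\nabla Q|^2r=\int|\nabla H|^2r^{2p+1}-p^2\int H^2r^{2p-1}.
\]
Multiplying by $4(p-1)/p^2$ produces \eqref{eq:diss-conjugate}, because $\frac{4(p-1)}{p^2}\cdot p^2=4(p-1)$.

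This coefficient is also the zero-order coefficient in \eqref{eq:Fweighted} at $\alpha=2p+1$: there $(3-\alpha)(\alpha-1)/p=(2-2p)(2p)/p=-4(p-1)$. Hence the left side of \eqref{eq:diss-conjugate} is exactly $\mathcal D_{2p+1,p}[H]$. Since the strain coefficient $\alpha-(2p+1)$ vanishes, \eqref{eq:Fweighted} becomes \eqref{eq:GammaLp}. As a consistency check, this agrees with \Cref{thm:Gammaweighted} at $\beta=1$, where both the zero-order term and the strain term vanish.

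The main obstacle is justifying the integration by parts of the cross term. The boundary term at the axis is $pH^2r^{2p}=p|F|^pr^{2p}$, which vanishes at $r=0$ because $F=O(1)$ there. At infinity it vanishes by decay, or by radial cutoffs as in the paper's standing justification.

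For $1<p<2$, $H=|F|^{p/2}$ is only Sobolev rather than smooth near the zero set of $F$. Here I would invoke the same chain rule used in \Cref{lem:diffusion}. One route is to run the computation with the renormalized $\Phi_\varepsilon$ quantities and pass to the limit by monotone convergence. A simpler route is to observe that $H\in W^{1,2}_{\rm loc}$ and that $\partial_r(H^2)=2HH_r$ holds almost everywhere. The last route is to apply the identity to compactly supported smooth approximations away from the axis and pass to the limit, using finiteness of the weighted quantities.
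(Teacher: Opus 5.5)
Your proposal is correct and follows essentially the same route as the paper. You use the pointwise substitution $\Gamma=r^2F$ for the energy, then expand $|\nabla(r^pH)|^2r$ and integrate the cross term by parts to obtain $-p^2\int H^2r^{2p-1}$, and finally identify the left side of \eqref{eq:diss-conjugate} with $\mathcal D_{2p+1,p}[H]$ so that \eqref{eq:Fweighted} reduces to \eqref{eq:GammaLp}. Your explicit check that the axis boundary term $p|F|^pr^{2p}$ vanishes, and your remarks on the chain rule for $1<p<2$, add care that the paper's proof leaves implicit.
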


\begin{proof}
Since \(\Gamma=r^2F\), \eqref{eq:energy-conjugate} is immediate.  Let \(H=|F|^{p/2}\).  Then \(|\Gamma|^{p/2}=r^pH\).  Expanding,
\begin{align*}
 \int \left|\partial_r(r^pH)\right|^2r\,dr\,dz
 &=\int |H_r|^2r^{2p+1}\,dr\,dz
 -p^2\int H^2r^{2p-1}\,dr\,dz,
\end{align*}
where the cross term is integrated by parts.  Also
\[
 \int \left|\partial_z(r^pH)\right|^2r\,dr\,dz
 =\int |H_z|^2r^{2p+1}\,dr\,dz.
\]
Multiplying by \(4(p-1)/p^2\) gives \eqref{eq:diss-conjugate}.  At \(\alpha=2p+1\) the compression coefficient in \eqref{eq:Fweighted} vanishes, so \eqref{eq:GammaLp} follows.  One obtains the same identity directly by testing \eqref{eq:Gamma} with \(|\Gamma|^{p-2}\Gamma r\); the axis contribution vanishes because \(\Gamma(0,z,t)=0\).
\end{proof}

\begin{remark}
For \(p=2\), \(\alpha=5\) is therefore not a new stretching-free \(F\)-energy: it is exactly the three-dimensional \(L^2\) circulation norm \(\int\Gamma^2r\).  At the other end, the formal limit \(p\downarrow1\) sends the cancellation weight to \(\alpha=3\), the five-dimensional conservative weight.
\end{remark}

\section{The weighted spectrum of the vorticity ratio}

Set \(W=\Gamma_z/r\).  Since \(F=\Gamma/r^2\),
\begin{equation}\label{eq:sourcefactor}
 \partial_z(F^2)=\frac{2\Gamma W}{r^3}.
\end{equation}

\begin{theorem}\label{thm:Gidentity}
For every \(\alpha>1\),
\begin{align}
 \frac12\frac d{dt}\int G^2\,d\mu_\alpha
 &+\nu\int|\nabla G|^2\,d\mu_\alpha
 +\nu\frac{(3-\alpha)(\alpha-1)}2
 \int G^2r^{\alpha-2}\,dr\,dz\nonumber\\
 &=\frac{\alpha-1}{2}\int UG^2\,d\mu_\alpha
 +2\int \Gamma WG\,r^{\alpha-3}\,dr\,dz.
 \label{eq:Gweighted}
\end{align}
The full \(G\)-diffusion form is coercive for \(1<\alpha<5\).  More precisely, for \(1<\alpha\le3\),
\[
 \int|\nabla G|^2r^\alpha
 +\frac{(3-\alpha)(\alpha-1)}2\int G^2r^{\alpha-2}
 \ge \int|\nabla G|^2r^\alpha,
\]
while for \(3<\alpha<5\),
\begin{equation}\label{eq:Ggap}
 \int|\nabla G|^2r^\alpha
 +\frac{(3-\alpha)(\alpha-1)}2\int G^2r^{\alpha-2}
 \ge\frac{5-\alpha}{\alpha-1}\int|\nabla G|^2r^\alpha.
\end{equation}
Both coefficients are sharp.  At \(\alpha=5\) the full form is nonnegative but has no positive full-gradient gap, and for \(\alpha>5\) the full diffusion form is indefinite.
\end{theorem}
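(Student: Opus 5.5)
The plan mirrors the proof of \Cref{thm:Fidentity} with \(p=2\), plus the source term. We multiply \eqref{eq:G} by \(G r^\alpha\) and integrate over \(dr\,dz\). By \Cref{lem:weighteddiv} with \(h=G\), the transport term contributes \(-\frac{\alpha-1}{2}\int UG^2\,d\mu_\alpha\); moving it to the right-hand side gives the first source term. For the diffusion we apply \Cref{lem:diffusion} with \(p=2\), so \(H=|G|\) and \(|\nabla H|^2=|\nabla G|^2\) almost everywhere. This yields \(-\int|\nabla G|^2 d\mu_\alpha-\frac{(3-\alpha)(\alpha-1)}{2}\int G^2r^{\alpha-2}\). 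Axis terms vanish since \(G=O(1)\) and \(\alpha>1\). For the forcing we use \eqref{eq:sourcefactor}: \(\int \partial_z(F^2)G r^\alpha=2\int \Gamma W G r^{\alpha-3}\). Rearranging gives \eqref{eq:Gweighted}. Here \(\Gamma=O(r^2)\) and \(W=O(r)\), so this integrand is \(O(r^{\alpha})\) near the axis and causes no trouble.

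For the coercivity claims we compare with \Cref{thm:phase} at \(p=2\). There the diffusion form equals \(\int|\nabla H|^2r^\alpha+\frac{(3-\alpha)(\alpha-1)}2\int H^2r^{\alpha-2}\), which is \emph{exactly} the \(G\)-form with \(H\) replaced by \(G\), because \(4(p-1)/p^2=1\) when \(p=2\). So the structure is identical, with critical line \(2p+1=5\). Case \(1<\alpha\le3\) is immediate, since the zero-order term is nonnegative. For \(3<\alpha<5\) we apply \Cref{lem:hardy} to the radial part and discard nothing from the axial part. This gives coefficient \(1-\frac{2(\alpha-3)}{\alpha-1}=\frac{5-\alpha}{\alpha-1}\) on \(\int G_r^2r^\alpha\) and coefficient \(1\) on \(\int G_z^2r^\alpha\). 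The minimum is \(\frac{5-\alpha}{\alpha-1}\), which matches \(c_{\alpha,2}\) from \eqref{eq:radgap}. Since the \(G\)-form is a quadratic form in \(G\) itself rather than in \(|G|^{p/2}\), we may apply the argument directly to smooth compactly supported \(G\). No renormalization is needed.

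Sharpness and the endpoint statements use the test functions from the proof of \Cref{thm:phase}. For \(\alpha\le3\) we take a fixed radial profile times \(\eta(z)\) oscillating rapidly, e.g. \(\eta(z)\cos(kz)\) with \(k\to\infty\); the quotient then tends to \(1\). For \(\alpha>3\) we take \(G_{R,L}=h_R(r)\eta(z/L)\) with \(L=R\). The radial Hardy quotient tends to \((\alpha-1)^2/4\), and the axial contribution is \(O(R^2/L)\) relative to \(L\log R\); this needs care. Set \(A=\int h_R'^2r^\alpha\asymp\log R\) and \(B=\int h_R^2r^\alpha=O(R^2)\). The axial term is \(L^{-1}B\int\eta'^2\), while the radial terms are \(L A\int\eta^2\). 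Taking \(L=R^{2}\) (or larger, as \Cref{thm:phase} would need when \(B\sim R^2\)) makes the axial piece negligible. The quotient of the form to the full gradient then tends to \(\frac{5-\alpha}{\alpha-1}\). That gives sharpness for \(3<\alpha<5\), the absence of a gap at \(\alpha=5\), and negativity for \(\alpha>5\) once \(R\) is large. Nonnegativity at \(\alpha=5\) follows from the Hardy estimate with coefficient \(0\).

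The main obstacle I expect is this balancing of the axial scale \(L\) against the radial cutoff \(R\): the plateau mass \(\int h_R^2r^\alpha\) grows polynomially while the Hardy gain grows only logarithmically. I would first try \(L=R^{2}\log R\). I would then verify that the error terms from the cutoff regions of \(h_R\) remain \(O(1)\) in the radial Hardy pairing, uniformly in \(L\).
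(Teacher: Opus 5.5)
Your proposal is correct and follows essentially the same route as the paper. Both arguments test \eqref{eq:G} with \(Gr^\alpha\), invoke \Cref{lem:weighteddiv}, \Cref{lem:diffusion} at \(p=2\) and \eqref{eq:sourcefactor}, apply Hardy for \(3<\alpha<5\), use oscillating axial factors for sharpness when \(\alpha\le3\), and use the tensor-product near-extremizers \(h_R(r)\eta(z/L)\) for sharpness, the \(\alpha=5\) endpoint and indefiniteness above it.

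Your worry about the axial scale is unnecessary, though harmless. By your own count the axial-to-radial ratio is \(B/(AL^2)\asymp R^2/(L^2\log R)\), which already tends to zero for the paper's choice \(L=R\). So \(L=R^2\) works but is not needed. Also, because the radial integrals do not depend on \(L\), the uniformity-in-\(L\) check you planned is automatic.
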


\begin{proof}
Multiply \eqref{eq:G} by \(G r^\alpha\).  The transport contribution is given by \Cref{lem:weighteddiv}; \Cref{lem:diffusion} with \(p=2\) gives the diffusion terms; and \eqref{eq:sourcefactor} gives the final source pairing.  For \(1<\alpha\le3\), the zero-order term is nonnegative.  The coefficient one in front of the full gradient is sharp: with a fixed compact radial factor, let the compact axial factor oscillate at frequency \(N\to\infty\); then the axial derivative dominates the zero-order and radial pieces.  For \(3<\alpha<5\), Hardy's inequality gives
\[
 1+\frac{2(3-\alpha)}{\alpha-1}=\frac{5-\alpha}{\alpha-1},
\]
and this coefficient is at most the untouched axial coefficient one.  Sharpness, endpoint loss of the full-gradient gap, and indefiniteness above \(5\) follow from the same tensor-product Hardy near-extremizers used in \Cref{thm:phase}.
\end{proof}

\begin{corollary}\label{cor:Gopt}
Among powers \(3\le\alpha<5\), the ratio of the explicit compression coefficient \((\alpha-1)/2\) in \eqref{eq:Gweighted} to the sharp radial gap in \eqref{eq:Ggap} is
\[
 \frac{(\alpha-1)^2}{2(5-\alpha)}.
\]
It is strictly increasing on \([3,5)\) and equals \(1\) at \(\alpha=3\).
\end{corollary}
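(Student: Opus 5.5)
The claim is pure algebra once the two coefficients are read off from \Cref{thm:Gidentity}, so the plan is to compute the ratio and then check its monotonicity. The numerator is the explicit compression coefficient \((\alpha-1)/2\) multiplying \(\int UG^2\,d\mu_\alpha\) on the right-hand side of \eqref{eq:Gweighted}. The denominator is the sharp radial gap \((5-\alpha)/(\alpha-1)\) from \eqref{eq:Ggap}. For \(\alpha=3\) the zero-order term in the diffusion form vanishes, and the gap coefficient is \(1=(5-3)/(3-1)\). Hence the same formula covers the closed endpoint \(\alpha=3\) as well, consistent with case \(1<\alpha\le3\) of \Cref{thm:Gidentity}.

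First I form the quotient:
\[
\frac{(\alpha-1)/2}{(5-\alpha)/(\alpha-1)}=\frac{(\alpha-1)^2}{2(5-\alpha)}.
\]
This is well defined on \([3,5)\) because \(5-\alpha>0\) there. Evaluating at \(\alpha=3\) gives \(4/(2\cdot2)=1\).

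For strict monotonicity I note that on \([3,5)\) the numerator \((\alpha-1)^2\) is positive and strictly increasing, since \(\alpha-1\ge2>0\). The denominator \(2(5-\alpha)\) is positive and strictly decreasing. A quotient of a positive increasing function by a positive decreasing function is strictly increasing. As a cross-check I differentiate:
\[
\frac{d}{d\alpha}\frac{(\alpha-1)^2}{2(5-\alpha)}=\frac{2(\alpha-1)(5-\alpha)+(\alpha-1)^2}{2(5-\alpha)^2}=\frac{(\alpha-1)(9-\alpha)}{2(5-\alpha)^2}.
\]
This is positive for \(3\le\alpha<5\).

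There is no real obstacle. The only point needing care is to make sure the denominator is the sharp gap from \Cref{thm:Gidentity}, including the \(\alpha=3\) endpoint where the Hardy step is vacuous, rather than a weaker constant. I would also remark that the ratio blows up as \(\alpha\uparrow5\), mirroring \Cref{cor:ratio} for the \(F\)-family.
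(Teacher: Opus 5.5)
Your proposal is correct and follows the paper's intended argument: the paper states this corollary without a separate proof, as direct algebra from the coefficients in \eqref{eq:Gweighted} and \eqref{eq:Ggap}, which is exactly what you carry out, and your derivative \((\alpha-1)(9-\alpha)/(2(5-\alpha)^2)\) confirms strict monotonicity on \([3,5)\). Your check that \((5-\alpha)/(\alpha-1)\) also gives the correct gap \(1\) at \(\alpha=3\), where \eqref{eq:Ggap} is only stated for \(3<\alpha<5\), is a useful detail that the paper leaves implicit.
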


\begin{corollary}\label{cor:G-formbound}
Let \(3\le\alpha<5\).  If the expansive strain satisfies
\[
 \int U_+G^2r^\alpha\,dr\,dz
 \le \Lambda\int|G_r|^2r^\alpha\,dr\,dz,
 \qquad U_+=\max(U,0),
\]
then the explicit compression/transport contribution in \eqref{eq:Gweighted} is strictly absorbed by the radial diffusion gap whenever
\[
 \Lambda<\frac{2\nu(5-\alpha)}{(\alpha-1)^2}.
\]
The admissible constant is maximal at \(\alpha=3\), where it equals \(\nu\).  This assertion concerns only the transport/compression term; the source \(2\int\Gamma WG r^{\alpha-3}\) remains to be estimated separately.
\end{corollary}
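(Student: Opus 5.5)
The plan is to mirror the proof of \Cref{cor:F-formbound}, now using the $G$-identity \eqref{eq:Gweighted} and the sharp gap \eqref{eq:Ggap} from \Cref{thm:Gidentity}.

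First I isolate the dangerous part of the transport/compression term. The right-hand side of \eqref{eq:Gweighted} contains $\frac{\alpha-1}{2}\int UG^2\,d\mu_\alpha$. Since $\alpha>1$, only the expansive part $U_+$ can increase the energy. So I bound
\[
 \frac{\alpha-1}{2}\int UG^2 r^\alpha\,dr\,dz\le\frac{\alpha-1}{2}\int U_+G^2r^\alpha\,dr\,dz\le\frac{\alpha-1}{2}\Lambda\int|G_r|^2r^\alpha\,dr\,dz
\]
using the hypothesis. The source $2\int\Gamma WGr^{\alpha-3}$ is left untouched, as the statement stipulates.

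Second, I extract the radial diffusion gap. At $\alpha=3$ the zero-order term vanishes and the radial coefficient is $1=\frac{5-3}{3-1}$. For $3<\alpha<5$, the proof of \Cref{thm:Gidentity} applies Hardy's inequality \eqref{eq:hardy} only to the negative zero-order term, which is controlled by $\int|G_r|^2r^\alpha$. This yields
\[
 \nu\int|G_z|^2r^\alpha+\nu\frac{5-\alpha}{\alpha-1}\int|G_r|^2r^\alpha
\]
as a lower bound for $\nu$ times the full diffusion form. Hence the radial coefficient available for absorption is $\nu(5-\alpha)/(\alpha-1)$.

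Strict absorption then holds when $\frac{\alpha-1}{2}\Lambda<\nu\frac{5-\alpha}{\alpha-1}$, which rearranges to $\Lambda<2\nu(5-\alpha)/(\alpha-1)^2$. At $\alpha=3$ this bound equals $2\nu\cdot2/4=\nu$. For maximality I note that
\[
 \alpha\mapsto\frac{5-\alpha}{(\alpha-1)^2}
\]
is strictly decreasing on $[3,5)$: the numerator decreases and is positive, while the denominator increases. Equivalently, the threshold is $\nu$ times the reciprocal of the ratio in \Cref{cor:Gopt}, which is strictly increasing there. This shows the admissible constant is largest at $\alpha=3$.

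There is no real obstacle; the argument is bookkeeping. The only care point is the second step: I must use the radial-only form of the gap rather than \eqref{eq:Ggap} itself. The form bound controls only $|G_r|^2$, and \eqref{eq:Ggap} bounds the full gradient with the weaker common coefficient. The radial coefficient $(5-\alpha)/(\alpha-1)$ comes directly from the Hardy step.
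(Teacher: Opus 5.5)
Your argument is correct and matches the paper's proof: you bound the $U$-term by $(\alpha-1)\Lambda/2$ times the radial Dirichlet energy, compare it with the radial gap $\nu(5-\alpha)/(\alpha-1)$, and use that $2(5-\alpha)/(\alpha-1)^2$ is strictly decreasing on $[3,5)$. One small correction: your warning against using \eqref{eq:Ggap} directly is unnecessary, because dropping $|G_z|^2$ from its right-hand side gives the same radial coefficient $(5-\alpha)/(\alpha-1)$, and that is exactly what the paper does.
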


\begin{proof}
The positive contribution of \(U\) in \eqref{eq:Gweighted} is bounded by \((\alpha-1)\Lambda/2\) times the radial derivative energy, while \eqref{eq:Ggap} supplies \(\nu(5-\alpha)/(\alpha-1)\).  Comparing the two coefficients gives the stated threshold.  The function \(2(5-\alpha)/(\alpha-1)^2\) decreases on \([3,5)\).
\end{proof}

\begin{remark}\label{rem:threeweights}
For the quadratic swirl-rate energy \(p=2\), three powers are especially transparent:
\begin{center}
\begin{tabular}{ccl}
\toprule
weight & role & exact interpretation \\
\midrule
\(\alpha=2\) & scale invariant & critical weighted \(F^2\) size \\
\(\alpha=3\) & five-dimensional & physical swirl kinetic energy; pure \(\Delta_5\) Dirichlet form \\
\(\alpha=5\) & stretching free & three-dimensional \(L^2\) circulation norm; Hardy-critical in \(F\) \\
\bottomrule
\end{tabular}
\end{center}
The separation of these three roles is one reason a single power multiplier cannot close the large-swirl problem by itself.
\end{remark}

\section{Why the five-dimensional weight is distinguished}

The preceding identities give an intrinsic characterization of \(r^3drdz\).

\begin{theorem}\label{thm:triple}
Among power weights \(d\mu_\alpha=r^\alpha drdz\) with \(\alpha>1\), the exponent \(\alpha=3\) is the unique value for which the following three properties hold simultaneously:
\begin{enumerate}[label=\textnormal{(\roman*)}]
\item the \(F\)-equation is conservative with respect to \(\mu_\alpha\), namely
\[
 F_t+\operatorname{div}_{\mu_\alpha}(bF)=\nu\Da F;
\]
\item \(\Da\) is formally self-adjoint in \(L^2(d\mu_\alpha)\);
\item the \(G\)-source pairing has no radial weight,
\[
 \int G\,\partial_z(F^2)\,d\mu_\alpha
 =2\int\Gamma WG\,dr\,dz.
\]
\end{enumerate}
Moreover, among the powers \(3\le\alpha<5\) for which the \(G\)-source pairing has no negative radial factor and the full diffusion retains a positive Hardy gap, \(\alpha=3\) uniquely minimizes the coefficient ratio of \Cref{cor:Gopt}, equivalently maximizing the absorbable form-bound constant in \Cref{cor:G-formbound}.  For the \(F^p\) family it likewise minimizes \Cref{cor:ratio} and maximizes the threshold in \Cref{cor:F-formbound} over the Hardy-coercive range \(3\le\alpha<2p+1\).
\end{theorem}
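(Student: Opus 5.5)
The plan is to verify each of the three properties by a one-line weighted computation, showing each forces $\alpha=3$, and then read off the optimality assertions from the earlier corollaries.

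For (i), use \Cref{lem:weighteddiv}. Write $b=(u^r,u^z)$ and $\operatorname{div}_{\mu_\alpha}(bF)=r^{-\alpha}[\partial_r(r^\alpha u^rF)+\partial_z(r^\alpha u^zF)]$. This equals $b\cdot\nabla F+(\alpha-1)UF$. Comparing with \eqref{eq:F}, whose right side contains $-2UF$, the equation takes the conservative form exactly when $(\alpha-1)UF=2UF$ for all flows. Since $U$ and $F$ do not vanish identically in general, this forces $\alpha=3$.

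For (ii), compute the formal adjoint of $\Da$ in $L^2(r^\alpha drdz)$. Integrating $\int g\,\Da f\,r^\alpha$ by parts twice in $r$ gives
\[
r^{-\alpha}\partial_{rr}(r^\alpha g)-3r^{-\alpha}\partial_r(r^{\alpha-1}g)+g_{zz}.
\]
The first-order coefficient of this adjoint is $(2\alpha-3)/r$, and it matches $3/r$ iff $\alpha=3$. At $\alpha=3$ the zero-order term also vanishes, and one sees directly that $\Da=r^{-3}\partial_r(r^3\partial_r)+\partial_{zz}$ is in divergence form. Alternatively, the mismatch term $(3-\alpha)\int f_r g\,r^{\alpha-1}$ already seen in \Cref{lem:diffusion} is antisymmetric-breaking unless $\alpha=3$. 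Test functions supported away from the axis suffice, so no axis terms arise.

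For (iii), \eqref{eq:sourcefactor} gives $\int G\,\partial_z(F^2)r^\alpha=2\int\Gamma WG\,r^{\alpha-3}$. The weight $r^{\alpha-3}$ is identically $1$ iff $\alpha=3$; here one should interpret ``no radial weight'' as identity of the integrands' weight. Uniqueness then holds trivially, since each property separately already pins $\alpha=3$. This means the ``simultaneously'' clause is immediate, and it also shows $\alpha=3$ does satisfy all three.

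For the ``moreover'' part, the argument is monotonicity bookkeeping. By \Cref{cor:Gopt}, the ratio $(\alpha-1)^2/(2(5-\alpha))$ is strictly increasing on $[3,5)$, so its unique minimum is at $\alpha=3$. Equivalently, the threshold $2\nu(5-\alpha)/(\alpha-1)^2$ of \Cref{cor:G-formbound} is strictly decreasing. Likewise \Cref{cor:ratio} gives the ratio $p(\alpha-1)/4$, strictly increasing in $\alpha$, and \Cref{cor:F-formbound} gives the threshold $4\nu/(p(\alpha-1))$, strictly decreasing on $[3,2p+1)$. Both are therefore uniquely optimal at $\alpha=3$.

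The only mildly delicate step is (ii): I must make precise that ``formally self-adjoint'' means symmetric on $C_c^\infty((0,\infty)\times\R)$. I must also exhibit, for $\alpha\ne3$, explicit $f,g$ with $\int g\,\Da f\,r^\alpha\ne\int f\,\Da g\,r^\alpha$. Taking $g\equiv$ a cutoff equal to $1$ on the support of $f$ gives a difference of $(3-\alpha)\int f_r r^{\alpha-1}=-(3-\alpha)(\alpha-1)\int f r^{\alpha-2}$, which is nonzero for positive $f$ since $\alpha>1$.
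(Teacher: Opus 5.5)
Your proposal is correct and follows essentially the paper's route: (i) via \Cref{lem:weighteddiv}, which gives the defect \((\alpha-3)UF\); (ii) via the divergence form \(r^{-3}\partial_r(r^3\partial_r)\); (iii) via the factor \(r^{\alpha-3}\) from \eqref{eq:sourcefactor}; and the optimality claims via monotonicity in the corollaries. Your explicit adjoint computation and the cutoff witness \(g\equiv1\) near \(\operatorname{supp} f\) add something the paper only asserts, namely the ``only if'' direction of (ii): they give the symmetry defect \((\alpha-1)(\alpha-3)\int f\,r^{\alpha-2}\,dr\,dz\neq0\) for \(\alpha\neq3\).
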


\begin{proof}
By \Cref{lem:weighteddiv},
\[
 F_t+\operatorname{div}_{\mu_\alpha}(bF)
 =\nu\Da F+(\alpha-3)UF,
\]
so (i) holds if and only if \(\alpha=3\).  The radial part of \(\Da\) is
\[
 \partial_{rr}+\frac3r\partial_r
 =r^{-3}\partial_r(r^3\partial_r),
\]
which is formally self-adjoint exactly in the measure \(r^3dr\); thus (ii) also selects \(\alpha=3\).  Finally, \eqref{eq:sourcefactor} shows that the source pairing carries the factor \(r^{\alpha-3}\), so it is flat precisely at \(\alpha=3\).  The optimization statements are \Cref{cor:ratio,cor:Gopt}.
\end{proof}

\section{Consequences and limitations of power-weight methods}

The preceding phase diagram can be summarized as follows.  For each \(p>1\), the range \(\alpha\ge3\), in which the coupled source pairing carries no negative radial factor, contains three regimes for the \(F^p\) energy:
\[
 \boxed{
 3\le\alpha<2p+1:
 \ \text{Hardy-coercive diffusion + uncancelled compression};}
\]
\[
 \boxed{
 \alpha=2p+1:
 \ \text{compression cancellation + Hardy-critical diffusion};}
\]
\[
 \boxed{
 \alpha>2p+1:
 \ \text{indefinite full diffusion form}.}
\]
The endpoint is not an unknown new energy: it is exactly the \(L^p\) energy of \(\Gamma\).  Thus, within the class of pure power multipliers, there is no intermediate weight that both eliminates the radial strain from the swirl-rate equation and retains a strict derivative gap in \(F\).

For \(p=2\), the two most natural weights are particularly transparent.  The five-dimensional weight \(\alpha=3\) gives
\[
 \int F^2r^3=\int (u^\theta)^2r,
\]
the physical swirl kinetic energy (up to \(2\pi\)), and \(\Da\) contributes the pure five-dimensional Dirichlet form.  The cancellation weight \(\alpha=5\) gives
\[
 \int F^2r^5=\int\Gamma^2r,
\]
the three-dimensional \(L^2\) circulation norm; the \(F\)-Hardy gap is then exactly critical, while the conjugated \(\Gamma\)-gradient remains dissipative.  This identifies the endpoint degeneration as a change of natural variable rather than a loss of viscosity in the underlying Navier--Stokes equation.

The result is a limitation of one specific strategy, not a regularity obstruction for the PDE itself.  Non-power weights, coupled energies, local geometry, sign information, and nonlocal structure can contain information absent from the present family.  Earlier work already contains the individual ingredients of weighted swirl energies, circulation estimates, and endpoint Hardy restrictions \cite{ChaeLee2002,ChenFangZhang2017,RencZaj2019}; other regularity arguments use form boundedness, Hardy--Sobolev estimates, Lorentz control, and geometric information in ways not reducible to a single power-weight energy \cite{LeiZhang2017,Zhang2017,WangHuangWeiYu2024}.  The phase diagram organizes these familiar ingredients into a sharp two-parameter statement and identifies exactly what the pure power-multiplier mechanism can and cannot buy.

\end{document}